\newtheorem{theorem}{Theorem}[section]
\newtheorem{example}{Example}
\newtheorem{lemma}[theorem]{Lemma}
\newtheorem{remark}{Remark}
\numberwithin{equation}{section}
\begin{document}
\title[A mutual control problem for semilinear systems]{A mutual control
problem for semilinear systems via fixed point approach}
\author[R. Precup]{Radu Precup}
\address{R. Precup, Faculty of Mathematics and Computer Science and
Institute of Advanced Studies in Science and Technology, Babe\c{s}-Bolyai
University, 400084 Cluj-Napoca, Romania \& Tiberiu Popoviciu Institute of
Numerical Analysis, Romanian Academy, P.O. Box 68-1, 400110 Cluj-Napoca,
Romania}
\email{r.precup@ictp.acad.ro}
\author[A. Stan]{Andrei Stan}
\address{A. Stan, Department of Mathematics, Babe\c{s}-Bolyai University,
400084 Cluj-Napoca, Romania \& Tiberiu Popoviciu Institute of Numerical
Analysis, Romanian Academy, P.O. Box 68-1, 400110 Cluj-Napoca, Romania}
\email{andrei.stan@ubbcluj.ro}

\begin{abstract}
In this paper, we introduce and discuss the concept of a mutual control
problem. Our analysis relies on a vector fixed-point approach based on the
fixed-point theorems of Perov, Schauder, and Avramescu. In our analysis we
employ a novel technique utilizing Bielecki equivalent norms.
\end{abstract}

\maketitle

\begin{center}
% Dedicated to Professor Sehie Park with admiration
% Dedicated to Professor Sehie Park with admiration
\end{center}

\bigskip

\textit{Key words and phrases}: control; fixed point; nonlinear operator;
differential system

\textit{Mathematics Subject Classification} (2010): 34B15, 34K35, 47H10

\section{Introduction and preliminaries}

The study of systems of abstract or concrete equations has been the subject
of research for a long time, especially from the perspective of the
existence and uniqueness of solutions. In the present paper, our objective
goes beyond merely establishing the existence of solutions; we strive to
identify solutions whose components maintain a level of control over each
other.

Let $D_{1},D_{2},C$ be sets with $C\subset D_{1}\times D_{2}$ and let $%
E:D_{1}\times D_{2}\rightarrow Z$ be a mapping, where $Z$ is a linear space.
Consider the equation $E(x,\lambda )=0_{Z}$ and the problem of finding $%
\lambda \in D_{2}$ such that the equation has a solution $x\in D_{1}$ with $%
(x,\lambda )\in C$. We say that $x$ is the state variable, $\lambda $ is the
control variable, and $C$ is the controllability domain. One way to solve
the problem (see, e.g, \cite{pn}) is to use the controllability condition to
express $\lambda $ as a function of $x$, $\lambda =N(x)$, and then find a
solution to the equation $E(x,N(x))=0_{Z}$. Alternatively, we can express $x$
as $x=N(\lambda )$ and then solve the equation $E(N(\lambda ),\lambda
)=0_{Z} $. In the first case, we solve the problem
\begin{equation*}
\left\{
\begin{array}{l}
\lambda =N\left( x\right) \\[5pt]
E\left( x,N\left( x\right) \right) =0_{Z},%
\end{array}
\right.
\end{equation*}
while in the second case, the problem
\begin{equation*}
\left\{
\begin{array}{l}
x=N\left( \lambda \right) \\[5pt]
E\left( N\left( \lambda \right) ,\lambda \right) =0_{Z}.%
\end{array}
\right.
\end{equation*}

% The aim of this paper is to discuss the solvability of systems of two
% equations with \textit{mutual controllability}.
For a system of two equations, we view each variable as a control over the
other, and we aim to find solutions that achieve equilibrium under certain
controllability conditions. We call this a \textit{mutual control} problem.
Specifically, we consider four sets $D_{1},D_{2},C_{1},C_{2}$ with $%
C_{1}\subset D_{1}\times D_{2}$ and $C_{2}\subset D_{2}\times D_{1}$, $%
E_{1},E_{2}:D_{1}\times D_{2}\rightarrow Z$ two mappings and the problem
\begin{equation*}
\begin{cases}
E_{1}(x,y)=0_{Z} \\[5pt]
E_{2}(x,y)=0_{Z} \\[5pt]
(x,y)\in C_{1},\text{ }(y,x)\in C_{2}.%
\end{cases}%
\end{equation*}
The interpretation of this problem is as follows: the first equation
describes the state $x$ controlled by $y$, and the second equation describes
the state $y$ controlled by $x$. The restriction of $(x,y)$ to $C_{1}$ and $%
(y,x)$ to $C_{2}$ represents the controllability conditions.
% variable $y$ is the
% control of the state $x$ governed by the first equation, while variable $x$
% is the control of state $y$ governed by the second equation. The
% controllability conditions on $x$ and $y$ are expressed by their membership
% in $C_{1}$ and $C_{2}$, respectively. We call this problem the mutual
% control problem.
One way to solve such a problem, is to incorporate the controllability
conditions into the equations and give the problem a fixed point formulation
\begin{equation*}
\left( x,y\right) \in \left( N_{1}(x,y),\ N_{2}(x,y)\right) ,
\end{equation*}
where $N_{1},N_{2}$ are set-valued mappings $N_{1}:D_{1}\times
D_{2}\rightarrow D_{1},$ $N_{2}:D_{1}\times D_{2}\rightarrow D_{2}.$ A
solution $\left( x,y\right) $ of this fixed point equation is said to be a
solution of the mutual control problem, while the problem is said to be
\textit{mutually controllable} if such a solution exists.

In some cases, we can do even more, namely to express $x$ and $y$ as state
variables in terms of the controls $y$ and $x$, respectively. In such
situations, one has $x\in N_{1}(y)\ $ and $y\in N_{2}(x)$, where
\begin{align*}
& N_{1}:D_{2}\rightarrow D_{1},\ N_{1}(y):=\left\{ x:\text{ }
E_{1}(x,y)=0_{_{Z}}\text{ and }(x,y)\in C_{1}\right\} , \\
& N_{2}:D_{1}\rightarrow D_{2},\ N_{2}(x):=\left\{ y:\text{ }
E_{2}(x,y)=0_{_{Z}}\text{ and }(y,x)\in C_{2}\right\} .
\end{align*}
Note that $N_{1}$ indicates how the state $x$ is controlled by $y$, and $%
N_{2}$ shows how the state $y$ is controlled by $x$. The equilibrium is
achieved if there exists $(x,y)\in D_{1}\times D_{2}$ such that
\begin{equation*}
(x,y)\in \left( N_{1}(y),N_{2}(x)\right) .
\end{equation*}

It is interesting to note the similarity between the mutual control problem
and the Nash equilibrium problem. In case of the last one, we have two
functionals $J_{1}\left( x,y\right) $ and$\ J_{2}\left( x,y\right) .$
Minimizing $J_{1}\left( .,y\right) $ on $D_{1}$ for each $y\in D_{2}$ yields
to the set of minimum points denoted $s_{1}\left( y\right) ,$ while
minimizing $J_{2}\left( x,.\right) $ on $D_{2}\ $for each $x\in D_{1}$
yields to the set of minimum points denoted $s_{2}\left( x\right) .$ A point
$\left( x,y\right) $ is a Nash equilibrium with respect the two functionals
if
\begin{equation*}
\left( x,y\right) \in \left( s_{1}\left( y\right) ,\ s_{2}\left( x\right)
\right) .
\end{equation*}%
In situations when the two functionals are differentiable, a Nash
equilibrium $\left( x,y\right) $ solves the system
\begin{equation*}
\left\{
\begin{array}{l}
J_{11}\left( x,y\right) =0 \\
J_{22}\left( x,y\right) =0,%
\end{array}%
\right. \
\end{equation*}%
where $J_{11},J_{22}$ are the derivatives of $J_{1},J_{2}$ in the first and
second variable, respectively. These equations replace the ones associated
to $E_{1},E_{2},$ while the controllability conditions are such that $x,y$
minimize $J_{1},J_{2}$ in the first and second variable, respectively, that
is
\begin{eqnarray*}
C_{1} &=&\left\{ \left( x,y\right) \in D_{1}\times D_{2}:\ x\text{ minimizes
}J_{1}\left( .,y\right) \right\} , \\
C_{2} &=&\left\{ \left( y,x\right) \in D_{2}\times D_{1}:\ y\text{ minimizes
}J_{2}\left( x,.\right) \right\} .
\end{eqnarray*}%
Therefore, in the differential case, the Nash equilibrium problem appears as
a particular case of the mutual control problem as specified above. For
further details on such results, we refer the reader to \cite{bg, park, p1,
p2, ps, ps2, s, s2, s3}.

Compared to classical control theory, where the problems involve explicit
control variables (see, e.g., \cite{c, control}), mutual control problems
feature controls that are implicit by the very form of the coupling terms.
In this way, the problem of mutual control appears to be more general.

In this paper, we discuss the solvability of a mutual control problem for a
system of semilinear first-order differential equations, namely
\begin{equation}
\left\{
\begin{array}{l}
x^{\prime }+Ax=f\left( x,y\right) \ \ \  \\[5pt]
y^{\prime }+By=g\left( x,y\right) ,\text{ on }\left[ 0,T\right] .%
\end{array}%
\right.  \label{se}
\end{equation}%
Here, $T>0$; $x,y\in \mathbb{R}^{n};$ $A,B\in M_{n\times n}\left(
%TCIMACRO{\U{211d} }%
%BeginExpansion
\mathbb{R}
%EndExpansion
\right) $ and $f,g:%
%TCIMACRO{\U{211d} }%
%BeginExpansion
\mathbb{R}
%EndExpansion
^{n}\times
%TCIMACRO{\U{211d} }%
%BeginExpansion
\mathbb{R}
%EndExpansion
^{n}\rightarrow
%TCIMACRO{\U{211d} }%
%BeginExpansion
\mathbb{R}
%EndExpansion
^{n}$ are continuous mappings.
% Each of the two equations is controlled so that at time \( T \), one has
% \begin{equation}
% x(T) = ky(T), \label{cc0}
% \end{equation}
% for some given \( k > 0 \).
\ Each of the two states, $x$ and $y$, controls the other with the aim of
achieving the equilibrium specified by the controllability condition
\begin{equation}
x(T)=ky(T),  \label{cc0}
\end{equation}%
for some given $k>0$. Thus,
\begin{eqnarray*}
C_{1} &=&\left\{ (x,y)\,:\,x(T)=ky(T)\right\} , \\
C_{2} &=&\left\{ (y,x)\,:\,y(T)=\frac{1}{k}x(T)\right\} .
\end{eqnarray*}%
Related to our system $\left( \ref{se}\right) $, we assume that only one
initial state is know, say $y(0).$ Then, based on this and the
controllability condition $(\ref{cc0}),$ we determine both the solution and
the other initial state $x\left( 0\right) .$ Note that, in this case, we are
dealing a semi-observability problem since, given a control and some partial
initial information, we must determine both the output and the necessary
initial condition $x\left( 0\right) $ that leads to this output satisfying
the control. We send to \cite{control} for more details on the observability
problems.

Such a control condition is of interest in dynamics of populations when it
expresses the requirement that at a certain moment $T$ the ratio between two
populations, for example prey and predators, should be the desired $k.$
Similarly, for the control of epidemics, it expresses the requirement that
at some time one reach a certain ratio between the infected population and
that susceptible to infection. Analogous interpretations can be given in the
case of some chemical or medical reaction models.

Our analysis relies on the basic fixed point theorems of Perov, Schauder,
and Avramescu. We present both the advantages and limitations of each
theorem. Using a vector approach based on matrices instead of constants
allows us to obtain results independent of the norm of the spaces we are
working in.

\subsection{Bielecki-type norms}

For each number $\theta \geq 0$, on the space $C([0,T];\mathbb{R}^n)$, \ we
define the Bielecki norm
\begin{equation*}
\left\vert \left\vert u\right\vert \right\vert _{\theta }=\max_{t\in \left[
0,T\right] \ }e^{-\theta t}\left\vert u\left( t\right) \right\vert .
\end{equation*}
Note that if $\theta =0$, one has $\Vert \cdot \Vert _{0}=\Vert \cdot \Vert
, $ where $\Vert \cdot \Vert $ is the uniform norm. We mention that, with a
suitable choice of $\theta $, the Bielecki-type norms allow us to relax the
restrictions on constants required by the Lipschitz or growth conditions
when using fixed point theorems.

\subsection{Matrices convergent to zero}

A square matrix $M\in M_{n\times n}\left( \mathbb{R}_{+}\right) $ is said to
be convergent to zero if its power $M^{k}$ tends to the zero matrix as $%
k\rightarrow \infty $. The next lemma provides equivalent conditions for a
square matrix to be convergent to zero (see, e.g., \cite{berman,pmcz,pc}).

\begin{lemma}
\label{caracterizare} Let $A\in M_{n\times n}\left( \mathbb{R}_{+}\right) $
be a square matrix. The following statements are equivalent:
\end{lemma}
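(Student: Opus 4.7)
The plan is to route all of the characterizations through the spectral radius $\rho(A)$ of $A$, since the most efficient way to compare the various standard equivalents (convergence of powers, invertibility of $I-A$, a Neumann-series representation with nonnegative entries, existence of nonnegative solutions to $(I-A)x=b$ for every $b\ge 0$) is through a single scalar quantity. Concretely, I would first recall Gelfand's formula $\rho(A)=\lim_{k\to\infty}\|A^k\|^{1/k}$, which works for any submultiplicative norm on $M_{n\times n}(\mathbb{R})$.

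Next, I would establish the equivalence $A^k\to 0 \Longleftrightarrow \rho(A)<1$. The $(\Rightarrow)$ direction follows by taking an eigenpair $Av=\lambda v$ with $|\lambda|=\rho(A)$: then $A^kv=\lambda^k v$, and $A^k\to 0$ forces $|\lambda|<1$. For $(\Leftarrow)$, I would pass to the Jordan canonical form over $\mathbb{C}$; a Jordan block with eigenvalue $\lambda$ satisfying $|\lambda|<1$ has powers tending to zero (because entries are polynomials in $k$ multiplied by $\lambda^k$), so the whole matrix $A^k$ does as well.

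Then I would handle the algebraic characterizations. If $\rho(A)<1$, the Neumann series $\sum_{k\ge 0}A^k$ converges (geometrically, by Gelfand) and its sum equals $(I-A)^{-1}$; conversely, if $I-A$ is invertible with this series representation, then $A^k\to 0$ (the general term of a convergent series). Since $A$ has nonnegative entries, every partial sum is entrywise nonnegative, and so is the limit $(I-A)^{-1}$; this also immediately gives existence of a nonnegative solution $x=(I-A)^{-1}b$ for every $b\ge 0$. The converse direction, from "nonnegative solutions to $(I-A)x=b$ for all $b\ge 0$" back to $\rho(A)<1$, is the most delicate step: here I would invoke Perron--Frobenius, which guarantees a nonnegative eigenvector $v$ for $\rho(A)$, and test $(I-A)x=v$ against $v$ to derive $(1-\rho(A))\langle x,v\rangle \ge 0$ with $\langle x,v\rangle>0$, forcing $\rho(A)\le 1$; strict inequality is then obtained by perturbing $b$ slightly.

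The main obstacle is precisely this last implication, where nonnegativity must be exploited rather than assumed away; everything else is linear-algebraic routine once Gelfand's formula and the Jordan form are in hand. Since the paper only needs this lemma as a black box for matrix conditions later, I would present the chain compactly and cite \cite{berman,pmcz,pc} for the Perron--Frobenius input.
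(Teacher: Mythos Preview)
The paper gives no proof of this lemma: it is stated with a reference to \cite{berman,pmcz,pc} and used as a black box. So there is nothing to compare your argument against; you are supplying what the paper deliberately omits.

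That said, two remarks on your outline. First, your sketch covers (a)$\Leftrightarrow$(b)$\Leftrightarrow$(c) correctly in spirit, but the (c)$\Rightarrow$(b) step is imprecise: to get the identity $(1-\rho(A))\langle x,v\rangle=\langle v,v\rangle>0$ from $(I-A)x=v$ you need $v$ to be the \emph{left} Perron eigenvector (i.e., the Perron eigenvector of $A^{T}$), not the right one; once you do this the inequality is strict immediately and no perturbation of $b$ is needed. You also introduce an extra characterization (nonnegative solvability of $(I-A)x=b$ for all $b\geq 0$) that is not among the items (a)--(d) actually listed. Second, and more importantly for the paper, you do not address item (d) at all. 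Item (d) is precisely the $2\times 2$ criterion the authors invoke repeatedly afterwards (it is what defines the function $h(\theta)$ in Section~2.1). Its proof is a short computation: for $A\in M_{2\times 2}(\mathbb{R}_{+})$ the discriminant $(a_{11}-a_{22})^{2}+4a_{12}a_{21}$ is nonnegative, so both eigenvalues are real, and $\rho(A)<1$ is equivalent to $a_{11}+a_{22}<2$ together with $p(1)=1-\mathrm{tr}(A)+\det(A)>0$; the diagonal conditions $a_{11},a_{22}<1$ follow from $(A^{k})_{ii}\geq a_{ii}^{k}\to 0$. If you want your write-up to match what the paper actually needs, include this and drop the extraneous solvability clause.
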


\begin{description}
\item[(a)] The matrix $A$ is convergent to zero.

\item[(b)] The spectral radius of $A$ is less than $1$, i.e., $\rho (A)<1$.

\item[(c)] The matrix $I-A,$ where $I$ is the unit matrix of the same size,
is invertible and its inverse has nonnegative entries, i.e., $\left(
I-A\right) ^{-1}\in M_{n\times n}\left( \mathbb{R}_{+}\right) .$

\item[(d)] In case $n=2$, a matrix $A=[a_{ij}]_{1\leq i,j\leq 2}\in
M_{2\times 2}\left( \mathbb{R}_{+}\right) $ is convergent to zero if and
only if
\begin{equation*}
a_{11},\ a_{22}<1\ \ \text{and\ \ }a_{11}+a_{22}<1+a_{11}a_{22}-a_{12}a_{21}.
\end{equation*}
\end{description}

In what follows, the vectors in $\mathbb{R}^n$ are treated as columns. By $%
\left\vert x \right\vert$, we mean the Euclidean norm of the vector $x$; $%
\left\langle x, y \right\rangle$ represents the inner product in $\mathbb{R}
^n$; and $\left\Vert x \right\Vert$ stands for the uniform norm in $C\left(
[0, T]; \mathbb{R}^n \right)$. For a square matrix $A$, we denote $S_A(t) =
e^{-tA}$ ($t \in \mathbb{R}$). Additionally, let $C_A$ be a upper bound of
the norms of the linear operators $e^{-tA}$ on $[-T, T]$, i.e.,
\begin{equation*}
\max_{t\in [-T,T]}\left \vert e^{-tA} \right \vert\leq C_A.
\end{equation*}

We conclude this introduction by recalling two less known fixed point
theorems which are applied in order to establish our results.

\begin{theorem}[Perov]
\label{perov} Let $\left( X_{i},d_{i}\right) ,$ $i=1,2$ be complete metric
spaces and $N_{i}:X_{1}\times X_{2}\rightarrow X_{i}$ be two mappings for
which there exists a square matrix $M$ of size two with nonnegative entries
and the spectral radius $\rho \left( M\right) <1$ such that the following
vector inequality
\begin{equation*}
\left(
\begin{array}{c}
d_{1}\left( N_{1}\left( x,y\right) ,N_{1}\left( u,v\right) \right) \\[5pt]
d_{2}\left( N_{2}\left( x,y\right) ,N_{2}\left( u,v\right) \right)%
\end{array}%
\right) \leq M\left(
\begin{array}{c}
d_{1}\left( x,y\right) \\[5pt]
d_{2}\left( u,v\right)%
\end{array}%
\right)
\end{equation*}%
holds for all $\left( x,y\right) ,\left( u,v\right) \in X_{1}\times X_{2}.$
Then, there exists a unique point $\left( x,y\right) \in X_{1}\times X_{2}$
with
\begin{equation*}
(x,y)=\left( N_{1}\left( x,y\right) ,\ N_{2}\left( x,y\right) \right) .
\end{equation*}
\end{theorem}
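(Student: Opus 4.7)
The plan is to apply the standard Picard iteration scheme, but working with vector-valued distances and using the matrix $M$ in place of a scalar contraction constant. Set $X = X_{1}\times X_{2}$ and define $N:X\to X$ by $N(x,y)=(N_{1}(x,y),N_{2}(x,y))$. Introduce the vector-valued metric
\begin{equation*}
\vec{d}\bigl((x,y),(u,v)\bigr)=\left(\begin{array}{c} d_{1}(x,u)\\ d_{2}(y,v)\end{array}\right)\in \mathbb{R}_{+}^{2},
\end{equation*}
so that the hypothesis reads $\vec{d}(N(z),N(w))\leq M\,\vec{d}(z,w)$ componentwise for all $z,w\in X$ (I read the right-hand side of the stated inequality as $(d_{1}(x,u),d_{2}(y,v))^{T}$, which is the only consistent interpretation given that $x,u\in X_{1}$ and $y,v\in X_{2}$).

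Next I would pick an arbitrary $z_{0}=(x_{0},y_{0})\in X$ and iterate $z_{k+1}=N(z_{k})$. A straightforward induction, using the vector inequality $k$ times, gives $\vec{d}(z_{k+1},z_{k})\leq M^{k}\vec{d}(z_{1},z_{0})$. For $p\geq 1$, the vector triangle inequality and the geometric-sum trick yield
\begin{equation*}
\vec{d}(z_{k+p},z_{k})\leq \bigl(M^{k}+M^{k+1}+\cdots+M^{k+p-1}\bigr)\vec{d}(z_{1},z_{0})\leq M^{k}(I-M)^{-1}\vec{d}(z_{1},z_{0}),
\end{equation*}
where invertibility of $I-M$ with nonnegative inverse is guaranteed by Lemma~\ref{caracterizare}(c). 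Since $\rho(M)<1$, the same lemma gives $M^{k}\to 0$, so each component of the right-hand side tends to $0$. Hence $(x_{k})$ and $(y_{k})$ are Cauchy in $X_{1}$ and $X_{2}$, respectively, and completeness delivers a limit $z^{*}=(x^{*},y^{*})$.

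To confirm $z^{*}$ is a fixed point, I would note that the vector contraction condition applied to $z_{k}$ and $z^{*}$ shows that $N$ is continuous (componentwise Lipschitz in a vector sense), so passing to the limit in $z_{k+1}=N(z_{k})$ yields $z^{*}=N(z^{*})$. For uniqueness, suppose $z^{*}$ and $\widetilde{z}$ are both fixed points. Then $\vec{d}(z^{*},\widetilde{z})=\vec{d}(N(z^{*}),N(\widetilde{z}))\leq M\vec{d}(z^{*},\widetilde{z})$, and iterating gives $\vec{d}(z^{*},\widetilde{z})\leq M^{k}\vec{d}(z^{*},\widetilde{z})$ for every $k$; letting $k\to\infty$ forces $\vec{d}(z^{*},\widetilde{z})=0$.

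The only delicate point is the matrix telescoping step: one has to verify that the partial sums $I+M+\cdots+M^{p-1}$ are bounded above (entrywise) by $(I-M)^{-1}$, which in turn requires the nonnegativity of $(I-M)^{-1}$ rather than a mere scalar geometric series estimate. This is exactly where Lemma~\ref{caracterizare} does the heavy lifting, reducing the argument to the familiar Banach scheme.
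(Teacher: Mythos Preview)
Your proof is correct and is the standard Picard-iteration argument for Perov's theorem, including the correct reading of the right-hand side of the hypothesis as $(d_{1}(x,u),\,d_{2}(y,v))^{T}$. Note, however, that the paper does not actually prove this theorem: it is merely recalled in the preliminaries as a known result (alongside Avramescu's theorem) to be applied later, so there is no proof in the paper to compare your argument against.
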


\begin{theorem}[Avramescu]
Let $D_{1}$ be a closed convex subset of a normed space$\ Y$, $(D_{2},d)$ a
complete metric space, and $N_{i}:D_{1}\times D_{2}\rightarrow D_{i}$, $%
i=1,2 $ be continuous mappings. Assume that the following conditions are
satisfied:

\begin{enumerate}
\item[(i)] $N_{1}(D_{1}\times D_{2})$ is a relatively compact subset of $Y$ ;

\item[(ii)] There is a constant $L\in \lbrack 0,1)$ such that
\begin{equation*}
d(N_{2}(x,y),N_{2}(x,\bar{y}))\leq L \, d(y,\overline{y}),
\end{equation*}
for all $y,\overline{y}\in D_{2}$;
\end{enumerate}

Then, there exists $(x,y)\in D_{1}\times D_{2}$ such that
\begin{equation*}
N_{1}(x,y)=x,\quad N_{2}(x,y)=y.
\end{equation*}
\end{theorem}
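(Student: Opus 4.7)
The plan is to reduce this to a combination of the Banach and Schauder fixed point theorems by using condition (ii) to eliminate the $y$ variable. First, for each fixed $x \in D_1$, the map $y \mapsto N_2(x,y)$ is an $L$-contraction on the complete metric space $(D_2,d)$, so by the Banach contraction principle it has a unique fixed point, which I will denote $\varphi(x) \in D_2$. This defines a single-valued map $\varphi : D_1 \to D_2$ characterized by $N_2(x,\varphi(x)) = \varphi(x)$.

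Next, I would verify that $\varphi$ is continuous. Given $x, \bar{x} \in D_1$, the triangle inequality together with (ii) yields
\begin{equation*}
d(\varphi(x),\varphi(\bar{x})) \leq d(N_2(x,\varphi(x)),N_2(\bar{x},\varphi(x))) + L\, d(\varphi(x),\varphi(\bar{x})),
\end{equation*}
so that $d(\varphi(x),\varphi(\bar{x})) \leq (1-L)^{-1} d(N_2(x,\varphi(x)),N_2(\bar{x},\varphi(x)))$. Since $N_2$ is (jointly) continuous, the right-hand side tends to $0$ as $\bar{x} \to x$, which gives continuity of $\varphi$.

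Now define $T : D_1 \to D_1$ by $T(x) := N_1(x,\varphi(x))$. Then $T$ is continuous (composition of continuous maps) and its range lies inside $N_1(D_1 \times D_2)$, which by (i) is a relatively compact subset of $Y$; in particular $\overline{T(D_1)}$ is a compact subset of the closed convex set $D_1$. Applying Schauder's fixed point theorem to $T$ on $D_1$ produces some $x \in D_1$ with $x = T(x) = N_1(x,\varphi(x))$. Setting $y := \varphi(x)$ we then have simultaneously $N_1(x,y) = x$ and $N_2(x,y) = y$, as required.

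The steps are all fairly standard; the step that deserves the most care is the continuity of $\varphi$, since it is the hinge that allows the Schauder argument to be applied to the composed map $x \mapsto N_1(x,\varphi(x))$. The uniform contraction estimate and the joint continuity of $N_2$ are both essential here, and they are exactly what hypothesis (ii) (combined with continuity of $N_2$) is designed to supply.
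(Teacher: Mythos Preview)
The paper does not actually prove Avramescu's theorem; it is merely stated in the preliminaries as a known tool, with a pointer to the monographs of Deimling and Granas. Your argument is correct and is precisely the classical proof of this result: use the uniform contraction in the second variable to define the implicit map $\varphi$ via Banach's principle, obtain continuity of $\varphi$ from the estimate $(1-L)\,d(\varphi(x),\varphi(\bar x))\leq d\bigl(N_2(x,\varphi(x)),N_2(\bar x,\varphi(x))\bigr)$ and the joint continuity of $N_2$, and then apply Schauder's theorem to the compact continuous self-map $x\mapsto N_1(x,\varphi(x))$ of $D_1$.

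One small point worth making explicit: the hypothesis assumes only that $Y$ is a normed space, not a Banach space, so you are tacitly invoking the form of Schauder's theorem valid for a continuous self-map of a closed convex set whose range has compact closure in an arbitrary normed linear space. This version is indeed correct (the Schauder projection/Brouwer argument needs only the compactness of $\overline{T(D_1)}$ and the fact that $D_1$ is closed and convex, not completeness of $Y$), but flagging it would tighten the write-up.
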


Some reference works in fixed point theory are the books \cite{d} and \cite%
{g}.

\section{Controllability of the mutual control problem}

In the first part of this section, we present some properties of a
particular type of matrix that will be used throughout this paper.

\subsection{On a type of matrices convergent to zero}

In the subsequent analysis based on the fixed point arguments, it is
advantageous to use the matrix
\begin{equation*}
M(\theta )=%
\begin{bmatrix}
a_{11} & a_{12}\frac{e^{\theta T}-1}{\theta } \\[3pt]
a_{21} & a_{22}\frac{1-e^{-\theta T}}{\theta }%
\end{bmatrix}%
,
\end{equation*}%
where $\theta \geq 0,$ and we need to find $\theta $ such that $M(\theta )$
is convergent to zero. Here, $a_{ij}\,(i,j=1,2)$ are nonnegative numbers
with $\ $%
\begin{equation*}
a_{11}<1\ \ \ \text{and\ \ \ }a_{22}<\frac{1}{T}.
\end{equation*}%
Notice that the last inequality guarantees $a_{22}\frac{1-e^{-\theta T}}{%
\theta }<1$ since $\frac{1-e^{-\theta T}}{\theta }\leq T$ for every $\theta
\geq 0.$

From Lemma \ref{caracterizare}, the matrix $M(\theta )$ is convergent to
zero if and only if $h(\theta )<0$, where
\begin{align*}
h(\theta )& =\text{tr}(M(\theta ))-1-\text{det}(M(\theta )) \\
& =a_{11}+a_{22}\frac{1-e^{-\theta T}}{\theta }-1-a_{11}a_{22}\frac{
1-e^{-\theta T}}{\theta }+a_{12}a_{21}\frac{e^{\theta T}-1}{\theta }\ \
\left( \theta \geq 0\right) .
\end{align*}
Denoting $\tau =a_{22}(1-a_{11})-a_{12}a_{21},$ one has
\begin{align*}
& h^{\prime }(\theta )=\frac{1}{\theta ^{2}}\left( a_{22}e^{-\theta
T}(1-a_{11})(1+\theta T)+a_{12}a_{21}(\theta T-1)e^{\theta T}-\tau \right) ,
\\
& h(0)=-\left( 1-a_{11}\right) \left( 1-a_{22}T\right) +a_{12}a_{21}T, \\
& h^{\prime }\left( 0\right) =-a_{22}(1-a_{11})\left( \frac{T^{2}}{2}
+1\right) <0,\text{ and } \\
& \lim_{\theta \rightarrow \infty }h\left( \theta \right) =\left\{
\begin{array}{l}
a_{11}-1\ \ \text{if }a_{12}a_{21}=0 \\[5pt]
+\infty \ \ \text{if }a_{12}a_{21}>0.%
\end{array}
\right.
\end{align*}
In the following lemma, we discuss conditions to guarantee the existence of
a $\theta $ such that $M(\theta )$ is convergent to zero.

\begin{lemma}
Assume $0\leq a_{11}<1$ and $0<a_{22}<\frac{1}{T}$.

\begin{itemize}
\item[(i)] If $h(0)<0$, then $M(0)$ converges to zero.

\item[(ii)] If $h(0)\geq 0$, then there exists $\theta _{1}>0$ with $%
h^{\prime }(\theta _{1})=0$ and

\begin{itemize}
\item[(a)] if $h(\theta _{1})<0$, then the matrix $M(\theta )$ converges to
zero for every $\theta $ between the zeroes of $h$ and does not converge to
zero otherwise;

\item[(b)] if $h(\theta _{1})\geq 0$, then there are no $\theta $ such that $%
M(\theta )$ converges to zero.
\end{itemize}
\end{itemize}
\end{lemma}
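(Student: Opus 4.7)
The plan is to reduce the problem to studying the sign of the scalar function $h$. By Lemma~\ref{caracterizare}(d), together with the bounds $a_{11}<1$ and $a_{22}(1-e^{-\theta T})/\theta<1$ guaranteed by the standing hypotheses, the matrix $M(\theta)$ is convergent to zero if and only if $h(\theta)<0$. Part (i) follows immediately from this equivalence.

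For part (ii), I would first observe that the hypothesis $h(0)\geq 0$ forces $a_{12}a_{21}>0$: the explicit value $h(0)=-(1-a_{11})(1-a_{22}T)+a_{12}a_{21}T$ combined with $a_{11}<1$ and $a_{22}T<1$ shows that $a_{12}a_{21}=0$ would yield $h(0)<0$, contradicting the case. In particular, $\lim_{\theta\to\infty} h(\theta)=+\infty$.

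The crux of the argument is then to pin down the shape of $h$ on $[0,\infty)$. I would work with $g(\theta):=\theta^{2}h'(\theta)$, for which a careful computation (expanding the formula for $h'$ already recorded in the excerpt) yields $g(0)=0$, $g'(0)=0$, and
$g'(\theta)=T^{2}\theta\bigl(a_{12}a_{21}\,e^{\theta T}-a_{22}(1-a_{11})\,e^{-\theta T}\bigr).$
The bracketed factor is strictly increasing (its derivative is manifestly positive), starts at $-\tau$, and tends to $+\infty$; so in the nondegenerate regime $\tau>0$ it has a unique positive zero. Consequently $g$ first decreases below zero and then rises back to $+\infty$, crossing zero at a single positive point $\theta_{1}$. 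Since $h'$ and $g$ have the same sign on $(0,\infty)$, this $\theta_{1}$ is the unique critical point of $h$ on $(0,\infty)$ and the global minimum of $h$ on $[0,\infty)$.

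From this monotonicity picture the subcases drop out. In (a), the values $h(0)\geq 0$, $h(\theta_{1})<0$, and $h(\theta)\to+\infty$ together with the intermediate value theorem force $h$ to have exactly two zeros on $[0,\infty)$, with $h<0$ strictly between them and $h\geq 0$ outside; feeding this into the equivalence from Lemma~\ref{caracterizare}(d) gives exactly the stated dichotomy. In (b), $h(\theta_{1})\geq 0$ combined with $\theta_{1}$ being a global minimum yields $h\geq 0$ throughout, so no $\theta$ can satisfy $h(\theta)<0$. The main technical obstacle is the derivative computation and the cancellations producing $g(0)=g'(0)=0$, which remove the apparent singularity of $h'$ at the origin; once the formula $g'(\theta)=T^{2}\theta\,B(\theta)$ is in hand, everything else is routine monotonicity analysis.
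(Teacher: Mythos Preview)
Your argument takes a genuinely different route from the paper. After the common reduction to the sign of $h$ and the observation that $h(0)\geq 0$ forces $a_{12}a_{21}>0$, the paper establishes that $h$ is \emph{convex} on $(0,\infty)$ by computing $h''$ explicitly and bounding it below via the auxiliary function $\varphi(\theta)=e^{\theta T}\bigl((\theta T-1)^2+1\bigr)$; the existence of the minimiser $\theta_1$ then comes from convexity, the claimed inequality $h'(0)<0$, and $h\to+\infty$. You instead study $g(\theta)=\theta^2 h'(\theta)$ directly, obtaining the clean factorisation $g'(\theta)=T^2\theta\,B(\theta)$ and reading off the sign changes of $h'$ from the strictly increasing bracket $B$. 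Your computations are correct, and the factorisation is arguably more transparent than the $\varphi$-bound; both strategies yield the same qualitative picture of $h$ (decreasing then increasing) and make the subcases (a) and (b) immediate.

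There is, however, a gap: you treat only the ``nondegenerate regime'' $\tau>0$ and never return to $\tau\leq 0$. The hypotheses of part~(ii) do not exclude this case; for instance $a_{11}=0$, $T=1$, $a_{22}=\tfrac12$, $a_{12}a_{21}=1$ gives $h(0)=\tfrac12\geq 0$ and $\tau=-\tfrac12$. When $\tau\leq 0$ one has $B(0)=-\tau\geq 0$, so $g'\geq 0$ on $[0,\infty)$; since $g(0)=0$ this forces $g>0$, hence $h'>0$, on all of $(0,\infty)$. Then $h$ is strictly increasing, $h(\theta)\geq h(0)\geq 0$ everywhere, and the conclusion of (ii)(b) still holds---but there is \emph{no} critical point $\theta_1>0$ with $h'(\theta_1)=0$, so the existence assertion in the lemma fails as literally stated. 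You should handle this case explicitly (the minimum of $h$ on $[0,\infty)$ is then at $\theta=0$ and the dichotomy collapses to~(b)). Incidentally, your own Taylor analysis gives $h'(0)=g''(0)/2=-\tau T^2/2$, which shows that the paper's claimed value $h'(0)=-a_{22}(1-a_{11})(T^2/2+1)$ is not correct either, and the same $\tau\leq 0$ issue is implicitly present there as well.
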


\begin{proof}
(i) is obvious. The next assertions are based on the convexity of the
function $h.$ To prove this, we show its second derivative is positive
everywhere on $(0,\infty )$. Indeed, one has
\begin{equation*}
h^{\prime \prime }(\theta )=\frac{1}{\theta ^{3}}\left( 2\left(
a_{22}-a_{22}a_{11}-a_{12}a_{21}\right) +a_{12}a_{21}\varphi (\theta
)-a_{22}\left( 1-a_{11}\right) \varphi (-\theta )\right) ,
\end{equation*}
where
\begin{equation*}
\varphi (\theta )=e^{\theta T}\left( \left( \theta T-1\right) ^{2}+1\right) .
\end{equation*}
Since
\begin{equation*}
\varphi (\theta )\geq 2\,\text{ \ and }\ \varphi (-\theta )\leq 2,\text{ for
all $\theta \geq 0,$ }
\end{equation*}
we find
\begin{equation*}
\theta ^{3}h^{\prime \prime }(\theta )\geq 2\left(
a_{22}-a_{22}a_{11}-a_{12}a_{21}\right) +2a_{12}a_{21}-2a_{22}(1-a_{11})=0,
\end{equation*}
which gives our conclusion.

(ii) Assume that $h(0)\geq 0$. Since $h^{\prime }(0)<0$ and $\lim_{\theta
\rightarrow \infty }h(\theta )=+\infty $, the function $h$ has a minimum at
some $\theta _{1}\in (0,+\infty )$, whence $h^{\prime }(\theta _{1})=0$.
Clearly, if $h(\theta _{1})<0$, then $h$ has two positive zeros and is
negative between them and nonnegative otherwise. If $h(\theta _{1})\geq 0$,
then $h(\theta )\geq 0$ for all $\theta \geq 0$, and thus there are no $%
\theta $ such that $M(\theta )$ converges to zero.
\end{proof}

\begin{remark}
In case $a_{22}=0,$ $M\left( \theta \right) $ is convergent to zero if and
only if
\begin{equation*}
a_{11}<1\text{ \ \ and\ \ \ }a_{12}a_{21}\frac{e^{\theta T}-1}{\theta }
<1-a_{11}.
\end{equation*}
\end{remark}

Clearly, solving the equation $h^{\prime }(\theta )=0$ analytically is
challenging. Thus, instead of $M(\theta )$, we may consider an approximate
variant, larger than $M\left( \theta \right) ,$ namely
\begin{equation*}
\widetilde{M}(\theta )=
\begin{bmatrix}
a_{11} & a_{12}\frac{e^{\theta T}-1}{\theta } \\[5pt]
a_{21} & a_{22}\frac{1}{\theta }%
\end{bmatrix}
.
\end{equation*}
Since $M\left( \theta \right) \leq \widetilde{M}(\theta )$ componentwise, if
$\widetilde{M}(\theta )$ is convergent to zero, then $M\left( \theta \right)
$ is so too. Following similar reasoning as above, the matrix $\widetilde{M}
(\theta )$ converges to zero if and only if $\ \widetilde{h}(\theta )<0$,
where
\begin{equation*}
\widetilde{h}(\theta )=a_{11}+\frac{a_{22}}{\theta }-1-\frac{a_{11}a_{22}}{
\theta }+a_{12}a_{21}\frac{e^{\theta T}-1}{\theta }\ \ \left( \theta
>0\right) .
\end{equation*}
Note that $\widetilde{h}$ cannot be extended at zero by continuity and in
general is not convex. However, under suitable conditions on $a_{ij},$ a
similar result to the previous lemma holds true. In the following, $W:%
%TCIMACRO{\U{211d} }%
%BeginExpansion
\mathbb{R}
%EndExpansion
_{+}\rightarrow
%TCIMACRO{\U{211d} }%
%BeginExpansion
\mathbb{R}
%EndExpansion
_{+}$ represents the Lambert function restricted to $%
%TCIMACRO{\U{211d} }%
%BeginExpansion
\mathbb{R}
%EndExpansion
_{+}$, i.e., the inverse of the function $ze^{z}\ \left( z\in
%TCIMACRO{\U{211d} }%
%BeginExpansion
\mathbb{R}
%EndExpansion
_{+}\right) .$ For details we send to \cite{lambert}.

\begin{lemma}
Assume that $a_{11}<1$.

\begin{description}
\item[(i)] If
\begin{equation}
\tau :=a_{22}(1-a_{11})-a_{12}a_{21}>0  \label{rp1}
\end{equation}
then the function $\Tilde{h}$ is strictly convex on $(0,\infty )$.

\item[(ii)] If
\begin{equation*}
a_{12}a_{21}Te^{\theta _{1}T}<1-a_{11},
\end{equation*}
where $\theta _{1}=\frac{1}{T}\left( W\left( \frac{\tau }{e\,a_{12}a_{21}}
\right) +1\right) $, then $\widetilde{h}(\theta _{1})<0$ and there is a
vicinity $V=\left( \sigma _{1},\sigma _{2}\right) $ of $\theta _{1}$ with $%
0<\sigma _{1}<\sigma _{2}<+\infty $ such that matrix $\widetilde{M}(\theta )$
is convergent to zero for every $\theta \in V$ and is not so for $\theta
\notin V.$
\end{description}
\end{lemma}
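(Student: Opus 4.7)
The plan is to study the scalar function $\tilde{h}(\theta)$ on $(0,\infty)$, starting from the rewriting
\[
\tilde{h}(\theta) = (a_{11}-1) + \frac{\tau}{\theta} + \frac{a_{12}a_{21}\, e^{\theta T}}{\theta},
\]
which makes the role of $\tau$ explicit and separates a term that blows up at $\theta=0$ from one that blows up at $\theta=+\infty$. This identity will drive everything that follows.

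For part \textbf{(i)}, I would differentiate twice and reduce to a common denominator. A direct computation gives
\[
\tilde{h}'(\theta) = \frac{1}{\theta^2}\bigl(-\tau + a_{12}a_{21}\,e^{\theta T}(\theta T - 1)\bigr),
\]
and, after differentiating once more and grouping terms,
\[
\theta^3\tilde{h}''(\theta) = 2\tau + a_{12}a_{21}\,e^{\theta T}\bigl[(\theta T - 1)^2 + 1\bigr].
\]
Both summands on the right are strictly positive under the hypothesis $\tau>0$, so $\tilde{h}''(\theta)>0$ for every $\theta>0$, yielding the claimed strict convexity on $(0,\infty)$.

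For part \textbf{(ii)}, the critical equation $\tilde{h}'(\theta)=0$ reads $a_{12}a_{21}\,e^{\theta T}(\theta T-1)=\tau$. Setting $u:=\theta T-1$, this becomes $u\,e^{u}=\tau/(e\,a_{12}a_{21})$, whose unique nonnegative solution is $u=W\bigl(\tau/(e\,a_{12}a_{21})\bigr)$, giving back precisely $\theta_{1}=\tfrac{1}{T}\bigl(W(\tau/(e\,a_{12}a_{21}))+1\bigr)$. Evaluating $\tilde{h}(\theta_{1})$ then telescopes: substituting $\tau = a_{12}a_{21}\,u\,e^{\theta_{1} T}$ inside $\tfrac{\tau}{\theta_1}+\tfrac{a_{12}a_{21}e^{\theta_1 T}}{\theta_1}$ and using $u+1=\theta_{1} T$ collapses this sum to $a_{12}a_{21}\,T\,e^{\theta_{1}T}$, whence
\[
\tilde{h}(\theta_{1}) = (a_{11}-1) + a_{12}a_{21}\,T\,e^{\theta_{1} T},
\]
so the lemma's hypothesis is \emph{exactly} the inequality $\tilde{h}(\theta_{1})<0$.

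Finally, for the vicinity $V$, I would combine the strict convexity from (i) with the boundary behaviour: since $\tau>0$ and $a_{12}a_{21}>0$ (tacit in the very definition of $\theta_{1}$), we have $\tilde{h}(\theta)\to+\infty$ both as $\theta\to 0^{+}$ and as $\theta\to+\infty$. Together with $\tilde{h}(\theta_{1})<0$ and the convexity, this forces $\tilde{h}$ to have exactly two zeros $0<\sigma_{1}<\theta_{1}<\sigma_{2}<+\infty$, to be negative on $V=(\sigma_{1},\sigma_{2})$, and nonnegative outside. Invoking the characterization stated just before the lemma ($\tilde{M}(\theta)$ converges to zero iff $\tilde{h}(\theta)<0$, given $a_{11}<1$) completes the proof. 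I expect the main obstacle to be the algebraic reduction producing the compact formula for $\theta^{3}\tilde{h}''(\theta)$ in step (i), and, in step (ii), carefully tracking the two substitutions that cause $\tilde{h}(\theta_{1})$ to collapse into the simple form $a_{12}a_{21}Te^{\theta_{1}T}-(1-a_{11})$.
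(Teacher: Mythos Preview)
Your proof is correct and follows essentially the same route as the paper's: both compute $\widetilde{h}''$ to obtain the expression $\theta^{-3}\bigl(2\tau+a_{12}a_{21}e^{\theta T}[(\theta T-1)^{2}+1]\bigr)$ for part (i), and for part (ii) both solve $\widetilde{h}'(\theta)=0$ via the substitution $u=\theta T-1$ and the Lambert function, evaluate $\widetilde{h}(\theta_{1})=a_{11}-1+a_{12}a_{21}Te^{\theta_{1}T}$, and then combine strict convexity with the two limits $\widetilde{h}(\theta)\to+\infty$ at $0^{+}$ and $+\infty$ to locate the interval $V$. Your initial rewriting $\widetilde{h}(\theta)=(a_{11}-1)+\tau/\theta+a_{12}a_{21}e^{\theta T}/\theta$ is a mild streamlining that the paper does not make explicit, but the argument is otherwise identical.
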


\begin{proof}
(i) Simple computations yield
\begin{equation*}
\widetilde{h}^{\prime }(\theta )=\frac{1}{\theta ^{2}}\left(
a_{12}a_{21}e^{\theta T}\left( \theta T-1\right) -\tau \right) .
\end{equation*}
Differentiating again $h^{\prime }(\theta )$ with respect to $\theta \in
(0,\infty )$, and using (\ref{rp1}), we deduce
\begin{align*}
\Tilde{h}^{\prime \prime }(\theta )& =\frac{1}{\theta ^{3}}\left(
a_{12}a_{21}e^{\theta T}\theta ^{2}T^{2}-2\left( -\tau
+a_{12}a_{21}e^{\theta T}\theta T-a_{12}a_{21}e^{\theta T}\right) \right) \\
& =\frac{a_{12}a_{21}}{\theta ^{3}}e^{\theta T}\left( \theta
^{2}T^{2}-2\theta T+2\right) +\frac{2\tau }{\theta ^{3}}\left(
-a_{11}a_{22}-a_{12}a_{21}+a_{22}\right) \\
& =\frac{a_{12}a_{21}}{\theta ^{3}}e^{\theta T}\left( 1+(\theta
T-1)^{2}\right) +\frac{2\tau }{\theta ^{3}}>0.
\end{align*}

(ii) Note that from $a_{22}(1-a_{11})>0$, we find $\lim_{\theta \rightarrow
0}\widetilde{h}(\theta )=+\infty $, while from $a_{12}a_{21}>0$, we have $%
\lim_{\theta \rightarrow \infty }\widetilde{h}(\theta )=+\infty $.
Therefore, $\widetilde{h}$ has a minimum $\theta _{1}\in (0,\infty )$ .
Since $\widetilde{h}$ is convex, we have $\widetilde{h}^{\prime }(\theta
_{1})=0$, which leads to
\begin{equation*}
a_{12}a_{21}e^{\theta _{1}T}(\theta _{1}T-1)=\tau .
\end{equation*}
Letting $z:=\theta _{1}T-1$, we obtain
\begin{equation*}
ze^{z}=\frac{\tau }{e\,a_{12}a_{21}},
\end{equation*}
thus $z=W\left( \frac{\tau }{e\,a_{12}a_{21}}\right) $. Consequently,
\begin{equation*}
\theta _{1}=\frac{1}{T}\left( W\left( \frac{\tau }{e\,a_{12}a_{21}}\right)
+1\right) .
\end{equation*}
Next, evaluating $\widetilde{h}$ at $\theta _{1}$, we find
\begin{equation*}
\widetilde{h}(\theta _{1})=a_{11}-1+a_{12}a_{21}Te^{\theta _{1}T}.
\end{equation*}
Therefore, under our assumption, $\widetilde{h}(\theta _{1})<0$. The
conclusion follows from the convexity of $\widetilde{h}$ and its limits as $%
\theta $ approaches 0 and infinity.
\end{proof}

\begin{remark}
The advantage of using $\widetilde{M}(\theta )$ instead of $M(\theta )$ is
that we have an analytical expression of the point where the minimum of $%
\tilde{h}$ is attained. However, in applications, due to numerical computer
power, it may be more advantageous to find an approximate solution of the
equation $h^{\prime }(\theta )=0$, and evaluate $h$ at that point.
\end{remark}

Since $\widetilde{M}(\theta )$ is an approximation of $M(\theta )$, there
are cases where $\widetilde{M}\left( \theta \right) $ does not converge to
zero for any $\theta >0$, however, there exists a $\theta _{0}>0$ such that $%
M(\theta _{0})$ does. The example below illustrates this situation.

\begin{example}
Let
\begin{equation*}
a_{11}=0.3,\,a_{12}=0.62,\,a_{21}=0.45,\,a_{22}=0.63,\text{ and }T=0.98.
\end{equation*}
Simple computations show that $\tau =0.162>0$ and $\theta _{1}\approx 1.1931$
, but conditions (ii) is not satisfied, since
\begin{equation*}
0.88\approx a_{12}a_{21}Te^{\theta _{1}T}>1-a_{11}=0.7.
\end{equation*}
On the other hand, solving numerically $h^{\prime }(\theta )=0$ gives an
approximate solution $\theta _{0}\approx 0.3505$, and hence $h(\theta
_{0})\approx -0.00798<0,$ i.e., $M\left( \theta _{0}\right) $ is convergent
to zero. Additionally, note that $h(0)=0.0056>0$, i.e., $M(0)$ is not
convergent to zero.
\end{example}

\subsection{Fixed point formulation of the mutual control problem}

Throughout this section, let $\beta \in \mathbb{R}^{n}\ $be an arbitrarily
given value, and define
\begin{equation*}
C_{\beta }([0,T];\mathbb{R}^{n}):=\left\{ y\in C([0,T];\mathbb{R}
^{n})\;:\;y(0)=\beta \right\} .
\end{equation*}
Additionally, we denote
\begin{equation*}
X_{\beta }=C([0,T];\mathbb{R}^{n})\times C_{\beta }([0,T];\mathbb{R}^{n}),
\end{equation*}
which is a complete metric space with respect to the metric inherited from $%
C([0,T];\mathbb{R}^{n})\times C([0,T];\mathbb{R}^{n})$. Under the initial
condition $y\left( 0\right) =\beta ,$ the system (\ref{se}) is equivalent to
\begin{equation}
\begin{cases}
x\left( t\right) =S_{A}\left( t\right) x(0)+\int_{0}^{t}S_{A}\left(
t-s\right) f\left( x(s),y(s)\right) ds \\[5pt]
y\left( t\right) =S_{B}\left( t\right) \beta +\int_{0}^{t}S_{B}\left(
t-s\right) g\left( x(s),y(s)\right) ds,%
\end{cases}
\label{semi-observability}
\end{equation}
while the controllability condition (\ref{cc0}) becomes
\begin{align}
x(0)& =kS_{A}(-T)S_{B}\left( T\right) \beta
+k\int_{0}^{T}S_{A}(-T)S_{B}\left( T-s\right) g\left( x(s),y(s)\right) ds
\label{cc1} \\
& \quad -\int_{0}^{T}S_{A}\left( -s\right) f\left( x\left( s\right) ,y\left(
s\right) \right) ds.  \notag
\end{align}
Thus, substituting relation \eqref{cc1} in \eqref{semi-observability}, for
every $t\in \lbrack 0,T]$, we have
\begin{equation}
\begin{cases}
x\left( t\right) =N_{1}(x,y)(t) \\
y\left( t\right) =N_{2}(x,y)(t),%
\end{cases}
\label{MS2}
\end{equation}
where $\left( N_{1},N_{2}\right) \colon X_{\beta }\rightarrow X_{\beta }$,
\begin{align*}
N_{1}(x,y)(t)& =kS_{A}(t-T)S_{B}\left( T\right) \beta
+k\int_{0}^{T}S_{A}(t-T)S_{B}\left( T-s\right) g\left( x(s),y(s)\right) ds \\
& \quad -\int_{t}^{T}S_{A}\left( t-s\right) f\left( x(s),y(s)\right) ds,
\end{align*}
and
\begin{equation*}
N_{2}(x,y)(t)=S_{B}\left( t\right) \beta +\int_{0}^{t}S_{B}\left( t-s\right)
g\left( x(s),y(s)\right) ds.
\end{equation*}
Clearly, the operator $(N_{1},N_{2})$ is well defined from $X_{\beta }~$to $%
X_{\beta }$. Consequently, the system \eqref{MS2} can be viewed as a
fixed-point equation in $X_{\beta }$ for the operator $(N_{1},N_{2})$.

The next result establishes the equivalence between the fixed points of the
operator $(N_{1},N_{2})$ and the solutions of our mutual control problem.

\begin{lemma}
%TCIMACRO{\TeXButton{lms}{\label{lms}} }%
%BeginExpansion
\label{lms}
%EndExpansion
A pair $\left( x,y\right) \in X_{\beta }$ is a solution of the mutual
control problem (\ref{se})-(\ref{cc0}), i.e., it satisfies both $\left( \ref%
{semi-observability}\right) $ and $\left( \ref{cc1}\right) ,$ if and only if
it is a fixed point for the operator $\left( N_{1},N_{2}\right) .$
\end{lemma}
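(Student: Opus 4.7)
The plan is to prove both implications by direct substitution, relying on the semigroup identities $S_A(t+s)=S_A(t)S_A(s)$ (which, since $A$ is a constant matrix, hold for all real $t,s$), together with the elementary splitting $\int_0^t - \int_0^T = -\int_t^T$. The lemma is really a bookkeeping statement: $N_1$ was constructed precisely by substituting the controllability expression \eqref{cc1} for $x(0)$ into the first line of \eqref{semi-observability}, and $N_2$ is the right-hand side of the second line verbatim. So the content of the proof is to unwind that construction in both directions.

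For the forward implication, I would assume $(x,y)\in X_\beta$ satisfies \eqref{semi-observability} and \eqref{cc1}. Then $y=N_2(x,y)$ is immediate from the second line of \eqref{semi-observability}. For $x=N_1(x,y)$, I would take the first line of \eqref{semi-observability} and substitute \eqref{cc1} for $x(0)$; applying $S_A(t)$ to the three terms and using $S_A(t)S_A(-T)=S_A(t-T)$ and $S_A(t)S_A(-s)=S_A(t-s)$, I get
\begin{equation*}
x(t)=kS_A(t-T)S_B(T)\beta+k\int_0^T S_A(t-T)S_B(T-s)g\,ds-\int_0^T S_A(t-s)f\,ds+\int_0^t S_A(t-s)f\,ds.
\end{equation*}
The last two integrals combine into $-\int_t^T S_A(t-s)f\,ds$, which is exactly $N_1(x,y)(t)$.

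For the converse, assume $(x,y)=(N_1(x,y),N_2(x,y))$. The equation $y=N_2(x,y)$ gives the second line of \eqref{semi-observability} and also $y(0)=S_B(0)\beta=\beta$, so $y\in C_\beta$ as required. Evaluating $x=N_1(x,y)$ at $t=0$ yields exactly \eqref{cc1}, because $S_A(-T)S_B(T)\beta$, $\int_0^T S_A(-T)S_B(T-s)g\,ds$ and $-\int_0^T S_A(-s)f\,ds$ are recovered (the last integral comes from $-\int_0^T S_A(-s)f\,ds$ with $t=0$). Finally, to recover the first line of \eqref{semi-observability}, I would factor $S_A(t)$ from $N_1(x,y)(t)$ using $S_A(t-T)=S_A(t)S_A(-T)$ and $S_A(t-s)=S_A(t)S_A(-s)$ and rewrite $-\int_t^T S_A(t-s)f\,ds=\int_0^t S_A(t-s)f\,ds-\int_0^T S_A(t-s)f\,ds$; the bracketed expression multiplying $S_A(t)$ is then precisely $x(0)$ as identified a moment earlier, and we conclude $x(t)=S_A(t)x(0)+\int_0^t S_A(t-s)f\,ds$.

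There is no real obstacle here: the only point that requires attention is using the group property of $S_A$ for both positive and negative times (which is why $C_A$ was introduced as a bound on $[-T,T]$), and carefully tracking the signs when splitting the $f$-integral. Everything else is substitution.
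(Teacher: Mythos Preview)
Your proof is correct and follows essentially the same approach as the paper's: both directions hinge on evaluating $N_1(x,y)$ at $t=0$ to recover \eqref{cc1}, and on the semigroup identity $S_A(t-s)=S_A(t)S_A(-s)$ together with the splitting $\int_0^t-\int_0^T=-\int_t^T$ to pass between $N_1$ and the first line of \eqref{semi-observability}. The paper dismisses the forward implication as ``already explained'' from the construction preceding the lemma and, for the converse, applies $S_A(t)$ to \eqref{cc1} rather than factoring $S_A(t)$ out of $N_1(x,y)(t)$, but this is the same computation read in the opposite direction.
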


\begin{proof}
The necessity has already been explained. For the sufficiency, assume $%
(x,y)\in X_{\beta }$ is a fixed point of $(N_{1},N_{2})$, i.e., it satisfies %
\eqref{MS2}. Letting $t=0$ in the first relation, we derive the
controllability condition \eqref{cc1}. Next, applying the operator $S_{A}(t)$
to \eqref{cc1}, we deduce
\begin{align*}
& S_{A}(t)x(0)+\int_{0}^{T}S_{A}(t-s)f(x(s),y(s))ds\,ds \\
& =kS_{A}(t-T)S_{B}(T)\beta
+k\int_{0}^{T}S_{A}(t-T)S_{B}(T-s)g(x(s),y(s))\,ds,
\end{align*}
which, when used in the definition of $N_{1}$ leads to the first relation of %
\eqref{semi-observability}, i.e.,
\begin{equation*}
x(t)=S_{A}(t)x(0)+\int_{0}^{t}S_{A}(t-s)f(x(s),y(s))\,ds,
\end{equation*}
The second relation in \eqref{semi-observability} follows directly from the
definition of $N_{2}$.
\end{proof}

\subsection{ Existence and uniqueness via Perov's fixed point theorem}

For our first existence result, we apply Perov's fixed point theorem, which
requires that $f$ and $g$ satisfy global Lipschitz conditions.

\begin{theorem}
Assume that there are constants $a,b,c,d\geq 0$ such that
\begin{eqnarray*}
\left\vert f\left( x,y\right) -f\left( \overline{x},\overline{y}\right)
\right\vert &\leq &a\left\vert x-\overline{x}\right\vert +b\left\vert y-
\overline{y}\right\vert , \\
\left\vert g\left( x,y\right) -g\left( \overline{x},\overline{y}\right)
\right\vert &\leq &c\left\vert x-\overline{x}\right\vert +d\left\vert y-
\overline{y}\right\vert ,
\end{eqnarray*}
for all $x,\overline{x},y,\overline{y}\in
%TCIMACRO{\U{211d} }%
%BeginExpansion
\mathbb{R}
%EndExpansion
^{n}.$ If there exists $\theta \geq 0$ such that the matrix
\begin{equation}
M(\theta )=\left[
\begin{array}{ll}
TC_{A}\left( a+kcC_{B}\right) & C_{A}\left( b+kdC_{B}\right) \frac{e^{\theta
T}-1}{\theta } \\[5pt]
cTC_{B} & dC_{B}\frac{1-e^{-\theta T}}{\theta }%
\end{array}
\right]  \label{Matrice Mc}
\end{equation}
is convergent to zero, then the mutual control problem \emph{(\ref{se})-(\ref%
{cc0})} has a unique solution in $X_{\beta }$.
\end{theorem}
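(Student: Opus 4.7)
The plan is to invoke Perov's fixed point theorem for the operator $(N_1,N_2)$ on the complete metric space $X_{\beta}$, using Lemma \ref{lms} to translate a fixed point back into a solution of the mutual control problem. The key design choice is the metric on $X_{\beta}$: since the matrix $M(\theta)$ mixes the factor $T$ with the Bielecki factors $(e^{\theta T}-1)/\theta$ and $(1-e^{-\theta T})/\theta$, the natural thing is to use \emph{different} norms on the two components, namely the uniform norm $\|\cdot\|$ on the first factor $C([0,T];\mathbb{R}^n)$ and the Bielecki norm $\|\cdot\|_{\theta}$ on the second factor $C_{\beta}([0,T];\mathbb{R}^n)$. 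Both are equivalent to the uniform norm, so $X_{\beta}$ remains complete, and the vector-valued metric sending $((x,y),(\bar x,\bar y))$ to $(\|x-\bar x\|,\|y-\bar y\|_{\theta})^{\top}$ is the right one for applying Perov.

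Next I would establish the vector Lipschitz inequality with matrix $M(\theta)$. Starting from the definitions of $N_1$ and $N_2$, the global Lipschitz hypotheses on $f,g$ together with the bounds $|S_A(s)|,|S_B(s)|\le C_A,C_B$ on $[-T,T]$ give pointwise estimates of the form
\begin{equation*}
|N_1(x,y)(t)-N_1(\bar x,\bar y)(t)|\le C_A\!\int_{t}^{T}\!\!(a|x-\bar x|+b|y-\bar y|)\,ds+kC_AC_B\!\int_{0}^{T}\!\!(c|x-\bar x|+d|y-\bar y|)\,ds,
\end{equation*}
and analogously for $N_2$ with an integral from $0$ to $t$. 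In the $|x-\bar x|$ terms I would simply bound $|x(s)-\bar x(s)|\le \|x-\bar x\|$, picking up a factor $T$ (or at most $T$ from the integration length). In the $|y-\bar y|$ terms I would write $|y(s)-\bar y(s)|\le e^{\theta s}\|y-\bar y\|_{\theta}$, which after integration yields $(e^{\theta T}-1)/\theta$ in the $N_1$ estimate and, after multiplying by $e^{-\theta t}$ in the $N_2$ estimate, the factor $(1-e^{-\theta t})/\theta\le (1-e^{-\theta T})/\theta$. Taking $\max_{t}$ (respectively $\max_{t} e^{-\theta t}(\cdot)$) on both sides reproduces exactly the rows of $M(\theta)$.

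With the vector Lipschitz inequality
\begin{equation*}
\begin{pmatrix}\|N_1(x,y)-N_1(\bar x,\bar y)\|\\[2pt]\|N_2(x,y)-N_2(\bar x,\bar y)\|_{\theta}\end{pmatrix}\le M(\theta)\begin{pmatrix}\|x-\bar x\|\\[2pt]\|y-\bar y\|_{\theta}\end{pmatrix}
\end{equation*}
in hand, the hypothesis that $M(\theta)$ is convergent to zero (equivalently $\rho(M(\theta))<1$ by Lemma \ref{caracterizare}) allows Perov's theorem to supply a unique fixed point of $(N_1,N_2)$ in $X_{\beta}$. By Lemma \ref{lms} this fixed point is the unique solution of the mutual control problem \eqref{se}--\eqref{cc0}.

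The step that requires the most care is the bookkeeping in the pointwise estimate for $N_1$, because $N_1$ contains three contributions (the $S_A(t-T)S_B(T)\beta$ drift, the $g$-integral from $0$ to $T$ weighted by $kS_A(t-T)S_B(T-s)$, and the $f$-integral from $t$ to $T$), and only the last one produces the $C_A(b+kdC_B)$ combination in the $a_{12}$ entry. The bounded-operator estimate $|S_A(t-T)S_B(T-s)|\le C_AC_B$ over the full window $s\in[0,T]$, together with the choice of uniform norm for the first component, is exactly what is needed to fold $T$ and $(e^{\theta T}-1)/\theta$ into the single row giving rise to $a_{11}$ and $a_{12}$; any attempt to use the Bielecki norm in both slots would not reproduce the $T$-factors in the first column of $M(\theta)$.
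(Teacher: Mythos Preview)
Your proposal is correct and follows essentially the same route as the paper: Perov's theorem on $X_\beta$ with the uniform norm on the first factor and the Bielecki norm $\|\cdot\|_\theta$ on the second, the same pointwise Lipschitz estimates for $N_1,N_2$, and Lemma~\ref{lms} to translate the unique fixed point into the unique solution. One small slip in your closing commentary: it is not true that ``only the last'' term of $N_1$ produces the $a_{12}$ entry---the $f$-integral contributes $C_Ab$ and the $g$-integral contributes $kC_AC_Bd$, and it is their sum that yields $C_A(b+kdC_B)$, exactly as your earlier displayed pointwise estimate already shows.
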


\begin{proof}
We apply Perov's fixed point theorem to the operators $N_{1},N_{2}$ on the
space $X_{1}=C([0,T];\mathbb{R}^{n})$ and $X_{2}=C_{\beta }([0,T];\mathbb{R}
^{n})$. The first space, $X_{1}$, is equipped with the uniform norm, while
the second space, $X_{2}$, is equipped with a Bielecki norm $\Vert \cdot
\Vert _{\theta }$, where $\theta $ is given in the hypothesis. For any $x,
\overline{x}\in C\left( \left[ 0,T\right] ;%
%TCIMACRO{\U{211d} }%
%BeginExpansion
\mathbb{R}
%EndExpansion
^{n}\right) $ and $y,\overline{y}\in C_{\beta }\left( [0,T];\mathbb{R}
^{n}\right) $, one has
\begin{align*}
\left\vert N_{1}\left( x,y\right) \left( t\right) -N_{1}\left( \overline{x},
\overline{y}\right) \left( t\right) \right\vert & \leq
kC_{A}C_{B}\int_{0}^{T}\left\vert g\left( x\left( s\right) ,y\left( s\right)
\right) -g\left( \overline{x}\left( s\right) ,\overline{y}\left( s\right)
\right) ds\right\vert \\
& +C_{A}\int_{t}^{T}\left\vert f\left( x\left( s\right) ,y\left( s\right)
\right) -f\left( \overline{x}\left( s\right) ,\overline{y}\left( s\right)
\right) \right\vert ds \\
& \leq TC_{A}\left( a+kcC_{B}\right) \left\Vert x-\overline{x}\right\Vert \\
& \quad +C_{A}\left( b+kdC_{B}\right) \int_{0}^{T}e^{\theta s}e^{-\theta
s}\left\vert y(s)-\overline{y}(s)\right\vert ds, \\
& \leq a_{11}\left\Vert x-\overline{x}\right\Vert +a_{12}\frac{e^{\theta
T}-1 }{\theta }\left\Vert y-\overline{y}\right\Vert ,
\end{align*}
where
\begin{equation*}
a_{11}=TC_{A}\left( a+kcC_{B}\right) \text{ and }a_{12}=C_{A}\left(
b+kdC_{B}\right) .
\end{equation*}
Therefore, taking the supremmum over $t\in \lbrack 0,T]$ yields
\begin{equation*}
\left\Vert N_{1}(x,y)-N_{1}(\overline{x},\overline{y})\right\Vert \leq
a_{11}\Vert x-\overline{x}\Vert +a_{12}\frac{e^{\theta T}-1}{\theta }\Vert
y- \overline{y}\Vert _{\theta }.
\end{equation*}
For the second operator $N_{2}$, for every $t\in \lbrack 0,T]$, we estimate
\begin{align*}
\left\vert N_{2}\left( x,y\right) \left( t\right) -N_{2}\left( \overline{x},
\overline{y}\right) \left( t\right) \right\vert & \leq
C_{B}\int_{0}^{t}\left\vert g\left( x\left( s\right) ,y\left( s\right)
\right) -g\left( \overline{x}\left( s\right) ,\overline{y}\left( s\right)
\right) \right\vert ds \\
& \leq cTC_{B}\Vert x-\overline{x}\Vert +dC_{B}\frac{e^{t\theta }-1}{\theta }
\Vert y-\overline{y}\Vert _{\theta }.
\end{align*}
Multiplying both sides by $e^{-\theta t}$ and taking the supremum over $t\in
\lbrack 0,T]$, we obtain
\begin{equation*}
\left\Vert N_{2}\left( x,y\right) -N_{2}\left( \overline{x},\overline{y}
\right) \right\Vert _{\theta }\leq a_{21}\Vert x-\overline{x}\Vert +a_{22}
\frac{1-e^{-\theta T}}{\theta }\Vert y-\overline{y}\Vert _{\theta },
\end{equation*}
where
\begin{equation*}
a_{21}=cTC_{B}\text{ and }a_{22}=dC_{B}.
\end{equation*}
Writing the above relations in the vector form, one has
\begin{equation*}
\begin{bmatrix}
\left\Vert N_{1}\left( x,y\right) -N_{1}\left( \overline{x},\overline{y}
\right) \right\Vert \\[5pt]
\ \left\Vert N_{2}\left( x,y\right) -N_{2}\left( \overline{x},\overline{y}
\right) \right\Vert _{\theta }%
\end{bmatrix}
\leq M(\theta )
\begin{bmatrix}
\left\Vert x-\overline{x}\right\Vert \\[5pt]
\ \left\Vert y-\overline{y}\right\Vert _{\theta }%
\end{bmatrix}
.
\end{equation*}
Consequently, since the matrix $M(\theta )$ is convergent to zero, the
operator $(N_{1},N_{2})$ is a Perov contraction on $X_{\beta }$. Thus, in
view of Lemma \ref{lms}, there exists a unique solution $(x,y)\in X_{\beta }$
for the problem $\left( \ref{se}\right) $-$\left( \ref{cc0}\right) .$
\end{proof}

\subsection{Existence and localization via Schauder's fixed point theorem}

Our second result allows linear growths on the functions $f$ and $g$ instead
of the Lipschitz coditions, but at the cost of losing the uniqueness of the
fixed point for the operator $(N_{1},N_{2})$.

\begin{theorem}
\label{schauder} Assume that $f$ and $g$ satisfy the following linear growth
conditions
\begin{align}
\left\vert f(x,y)\right\vert & \leq a\left\vert x\right\vert +b\left\vert
y\right\vert +\gamma ,  \label{lgc} \\
\left\vert g(x,y)\right\vert & \leq c\left\vert x\right\vert +d\left\vert
y\right\vert +\delta ,  \notag
\end{align}
for all $x,y\in \mathbb{R}^{n}$ and some nonnegative constants $%
a,b,c,d,\gamma ,\delta $. If there exists $\theta \geq 0$ such that the
matrix $M\left( \theta \right) $ given in $\left( \ref{Matrice Mc}\right) $
is convergent to zero, then the mutual control problem \emph{(\ref{se})-(\ref%
{cc0})} has at least one solution $(x,y)\in X_{\beta }$, satisfying some
bounds of the form
\begin{equation*}
\left\vert x\left( t\right) \right\vert \leq R_{1}\quad \text{and}\quad
\left\vert y\left( t\right) \right\vert \leq e^{t\theta }R_{2},\ \left( t\in
\lbrack 0,T]\right) ,
\end{equation*}%
where $R_{1}$ and $R_{2}$ are chosen appropriately below.
\end{theorem}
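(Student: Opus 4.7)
The plan is to apply Schauder's fixed point theorem to the operator $(N_{1},N_{2})$ from Lemma~\ref{lms}. I equip the first factor of $X_{\beta}$ with the uniform norm and the second factor with the Bielecki norm $\|\cdot\|_{\theta}$, for the value of $\theta$ provided by the hypothesis. The main task is to construct a closed, convex, bounded subset $D\subset X_{\beta}$ that is invariant under $(N_{1},N_{2})$ and that produces the bounds announced in the conclusion. The natural candidate is
$$D=\bigl\{(x,y)\in X_{\beta}:\|x\|\leq R_{1}\ \text{and}\ \|y\|_{\theta}\leq R_{2}\bigr\},$$
with positive numbers $R_{1},R_{2}$ to be extracted from an invariance inequality.

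To find $R_{1},R_{2}$, I would repeat the integral estimates of the Perov proof verbatim, but replace the Lipschitz conditions with the linear growth bounds~(\ref{lgc}) and use that on $D$ one has $|x(s)|\leq R_{1}$ and $|y(s)|\leq e^{\theta s}R_{2}$. This produces inequalities of the form
$$\|N_{1}(x,y)\|\leq a_{11}R_{1}+a_{12}\frac{e^{\theta T}-1}{\theta}R_{2}+\gamma_{1},$$
$$\|N_{2}(x,y)\|_{\theta}\leq a_{21}R_{1}+a_{22}\frac{1-e^{-\theta T}}{\theta}R_{2}+\gamma_{2},$$
where $a_{ij}$ are the entries of $M(\theta)$ in~(\ref{Matrice Mc}) and $\gamma_{1},\gamma_{2}$ are explicit constants depending on $|\beta|,\gamma,\delta,T,k,C_{A},C_{B}$ but independent of $R_{1},R_{2}$. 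Invariance of $D$ then amounts to the vector inequality
$$(I-M(\theta))\binom{R_{1}}{R_{2}}\geq\binom{\gamma_{1}}{\gamma_{2}}.$$
Because $M(\theta)$ is convergent to zero, Lemma~\ref{caracterizare}(c) gives $(I-M(\theta))^{-1}\in M_{2\times 2}(\mathbb{R}_{+})$, so the choice
$$\binom{R_{1}}{R_{2}}:=(I-M(\theta))^{-1}\binom{\gamma_{1}}{\gamma_{2}}$$
yields nonnegative values for which $D$ is invariant under $(N_{1},N_{2})$.

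Next, I would verify the remaining hypotheses of Schauder. Continuity of $(N_{1},N_{2})$ on $X_{\beta}$ follows from the continuity of $f,g$ via the dominated convergence theorem applied to the integral terms. For relative compactness of $(N_{1},N_{2})(D)$, pointwise boundedness is immediate from invariance, while equicontinuity follows from the uniform bound on $|f(x(s),y(s))|$ and $|g(x(s),y(s))|$ for $(x,y)\in D$ supplied by the linear growth conditions, combined with the uniform continuity of $t\mapsto S_{A}(t)$ and $t\mapsto S_{B}(t)$ on $[-T,T]$. Arzel\`{a}--Ascoli then gives relative compactness, Schauder yields a fixed point $(x,y)\in D$, and Lemma~\ref{lms} identifies it with a solution of the mutual control problem~(\ref{se})--(\ref{cc0}) satisfying $|x(t)|\leq R_{1}$ and $|y(t)|\leq e^{\theta t}R_{2}$ on $[0,T]$.

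The point I expect to be most delicate is the construction of the invariant set: the linear growth conditions couple $\|N_{1}(x,y)\|$ and $\|N_{2}(x,y)\|_{\theta}$ through $R_{1},R_{2}$, and it is precisely the choice of the Bielecki norm $\|\cdot\|_{\theta}$ on the second factor that forces the coupling matrix to coincide with the same $M(\theta)$ from~(\ref{Matrice Mc}) used in the Perov argument. The single spectral condition that $M(\theta)$ be convergent to zero thus simultaneously controls the invertibility of $I-M(\theta)$ with nonnegative inverse and the existence of positive $R_{1},R_{2}$ satisfying the invariance inequality.
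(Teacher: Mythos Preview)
Your proposal is correct and follows essentially the same approach as the paper: apply Schauder's fixed point theorem to $(N_{1},N_{2})$ on a product of balls in $C([0,T];\mathbb{R}^{n})\times C_{\beta}([0,T];\mathbb{R}^{n})$, using the uniform norm on the first factor and the Bielecki norm $\|\cdot\|_{\theta}$ on the second, derive the same matrix growth inequality with $M(\theta)$ and free constants (the paper calls them $\eta_{1},\eta_{2}$), and use Lemma~\ref{caracterizare}(c) to solve the invariance condition. The only cosmetic difference is that the paper allows any $R_{1},R_{2}$ with $(I-M(\theta))^{-1}(\eta_{1},\eta_{2})^{T}\leq (R_{1},R_{2})^{T}$, whereas you take equality; also, the paper dismisses complete continuity as ``standard'' while you sketch the Arzel\`a--Ascoli argument.
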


\begin{proof}
We apply Schauder's fixed-point theorem to the operator $(N_{1},N_{2})$ on
the bounded, closed, and convex set $B_{R_{1}}\times B_{R_{2}}$. The sets $%
B_{R_{1}}$ and $B_{R_{2}}$ are balls centered at the origin with radii $%
R_{1} $ and $R_{2}$ in the Banach space $C([0,T];\mathbb{R}^{n})$ with the
uniform norm, and in the metric space $C_{\beta }([0,T];\mathbb{R}^{n})$
with the Bielecki norm $\left\vert |\cdot |\right\vert _{\theta }$,
respectively. Here, $R_{1}$ and $R_{2}$ are positive real numbers that will
be determined later.

To this aim, we need to show that the operator $(N_{1},N_{2})$ is completely
continuous on $X_{\beta }$, and that\ it maps the set $B_{R_{1}}\times
B_{R_{2}}$ into itself, i.e.,
\begin{equation}
\left\Vert N_{1}(x,y)\right\Vert \leq R_{1},\ \left\Vert
N_{2}(x,y)\right\Vert _{\theta }\leq R_{2}\text{ \ whenever\ }\Vert x\Vert
\leq R_{1},\ \,\Vert y\Vert _{\theta }\leq R_{2}.  \label{invarianta}
\end{equation}%
Using standard arguments, the complete continuity is immediate (see, e.g.,
\cite{pc}).

Let $\left( x,y\right) \in X_{\beta }$. Then,
\begin{align*}
\left\Vert N_{1}\left( x,y\right) \right\Vert & \leq kC_{A}C_{B}\left\vert
\beta \right\vert +kC_{A}C_{B}\int_{0}^{T}\left\vert g\left( x(s\right)
,y\left( s\right) \right\vert ds \\
& \quad +C_{A}\int_{t}^{T}\left\vert f\left( x(s\right) ,y\left( s\right)
\right\vert ds \\
& \leq TC_{A}\left( a+kcC_{B}\right) \left\Vert x\right\Vert +C_{A}\left(
b+kdC_{B}\right) \frac{e^{\theta T}-1}{\theta }\left\Vert y\right\Vert
_{\theta }+\eta _{1},
\end{align*}%
where$\ $%
\begin{equation*}
\eta _{1}=C_{A}\left( kC_{B}\left\vert \beta \right\vert +kTC_{B}\delta
+T\gamma \right) .
\end{equation*}%
Similarly,
\begin{equation}
\left\vert N_{2}\left( y,y\right) \left( t\right) \right\vert \leq
C_{B}\left\vert \beta \right\vert +C_{B}\int_{0}^{t}\left\vert g\left(
x\left( s\right) ,y\left( s\right) \right) \right\vert ds.  \label{growthN2}
\end{equation}%
Multiplying \eqref{growthN2} by $e^{-t\theta }$ and taking the supremum over
$[0,T]$ yields
\begin{equation*}
\left\vert N_{2}\left( y,y\right) \left( t\right) \right\vert \leq
cTC_{B}\left\Vert x\right\Vert +dC_{B}\frac{1-e^{-\theta T}}{\theta }%
\left\Vert y\right\Vert _{\theta }+\eta _{2},
\end{equation*}%
where
\begin{equation*}
\eta _{2}=C_{B}\left\vert \beta \right\vert +TC_{B}\delta .
\end{equation*}%
Thus
\begin{equation*}
\left[
\begin{array}{c}
\left\Vert N_{1}\left( x,y\right) \right\Vert ~~ \\[5pt]
\left\Vert N_{2}\left( y,y\right) \right\Vert _{\theta }%
\end{array}%
\right] \leq M\left( \theta \right) \left[
\begin{array}{c}
\left\Vert x\right\Vert \ \  \\[5pt]
\left\Vert y\right\Vert _{\theta }%
\end{array}%
\right] +\left[
\begin{array}{c}
\eta _{1} \\[5pt]
\eta _{2}%
\end{array}%
\right] .
\end{equation*}%
Note that the invariance condition $\left( \ref{invarianta}\right) \ $holds
if $R_{1},R_{2}$ satisfy
\begin{equation*}
M\left( \theta \right) \left[
\begin{array}{c}
R_{1} \\[5pt]
R_{2}%
\end{array}%
\right] +\left[
\begin{array}{c}
\eta _{1} \\[5pt]
\eta _{2}%
\end{array}%
\right] \leq \left[
\begin{array}{c}
R_{1} \\[5pt]
R_{2}%
\end{array}%
\right] ,
\end{equation*}%
or equivalently
\begin{equation}
\left[
\begin{array}{c}
\eta _{1} \\[5pt]
\eta _{2}%
\end{array}%
\right] \leq \left( I-M\left( \theta \right) \right) \left[
\begin{array}{c}
R_{1} \\[5pt]
R_{2}%
\end{array}%
\right] .  \label{eqmu}
\end{equation}

Since the matrix $M\left( \theta \right) $ is convergent to zero, Lemma \ref%
{caracterizare} guarantees that the matrix $\left( I-M\left( \theta \right)
\right) ^{-1}$ has nonnegative entries. Thus, we can multiply $\left( \ref%
{eqmu}\right) \ $with $\left( I-M\left( \theta \right) \right) ^{-1}$
without changing the sign of inequality. If we choose $R_{1},R_{2}$ large
enough such that
\begin{equation*}
(I-M\left( \theta \right) )^{-1}%
\begin{bmatrix}
\eta _{1} \\[5pt]
\eta _{2}%
\end{bmatrix}%
\leq
\begin{bmatrix}
R_{1} \\
R_{2}%
\end{bmatrix}%
,
\end{equation*}%
the invariance condition is satisfied. Consequently, Schauder's fixed point
theorem in $B_{R_{1}}\times B_{R_{2}}$ provides the result.
\end{proof}

\subsection{Existence via Avramescu's fixed point theorem}

Assuming that both functions $f,g$ have linear growth, and $g$ is Lipschitz
continuous with respect to the second variable $y,$ we obtain the following
existence result.

\begin{theorem}
Assume there exists constants $a,c,b,d,\gamma ,\delta \geq 0$ such that
\begin{align*}
& \left\vert f(x,y)\right\vert \leq a\left\vert x\right\vert +b\left\vert
y\right\vert +\gamma , \\
& \left\vert g(x,y)-g\left( x,\overline{y}\right) \right\vert \leq
d\left\vert y-\overline{y}\right\vert ,
\end{align*}%
and
\begin{equation*}
\left\vert g\left( x,0\right) \right\vert \leq c\left\vert x\right\vert
+\delta ,
\end{equation*}%
for all $x,y,\overline{y}\in \mathbb{R}^{n}$. If there exists $\theta \geq 0$
such that the matrix $M\left( \theta \right) ,$ given in \emph{(\ref{Matrice
Mc}),} is convergent to zero, then the mutual control problem $\left( \emph{%
\ref{se}}\right) $\emph{-}$\left( \emph{\ref{cc0}}\right) $ has at least one
solution in $X_{\beta }.$
\end{theorem}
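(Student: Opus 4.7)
The plan is to apply Avramescu's fixed point theorem to the operator pair $(N_1,N_2)$ on a product of balls $D_1\times D_2 := \overline{B}_{R_1}\times \overline{B}_{R_2}$, where $\overline{B}_{R_1}$ is the closed ball of radius $R_1$ in $C([0,T];\mathbb{R}^n)$ with the uniform norm (for the $x$-variable, playing the role of the compact piece), and $\overline{B}_{R_2}$ is the closed ball of radius $R_2$ in $C_\beta([0,T];\mathbb{R}^n)$ equipped with the Bielecki norm $\|\cdot\|_\theta$ (for the $y$-variable, playing the role of the contractive piece). This matches exactly the hypotheses of Avramescu's theorem, since $D_2$ is a complete metric space (the Bielecki norm is equivalent to the uniform norm, so closed balls are complete). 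The role of $N_1$ and $N_2$ is chosen so that the Lipschitz hypothesis falls on $N_2$, which is dictated by $g$ being Lipschitz in $y$.

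First I would observe that the Lipschitz hypothesis on $g$ together with the growth of $g(x,0)$ yields the full linear growth
\begin{equation*}
|g(x,y)| \le |g(x,y) - g(x,0)| + |g(x,0)| \le c|x| + d|y| + \delta,
\end{equation*}
so that the invariance argument can be carried out identically to the proof of Theorem \ref{schauder}. Namely, repeating those estimates gives
\begin{equation*}
\begin{bmatrix}
\|N_1(x,y)\| \\[3pt]
\|N_2(x,y)\|_\theta
\end{bmatrix}
\le M(\theta)
\begin{bmatrix}
\|x\| \\[3pt]
\|y\|_\theta
\end{bmatrix}
+
\begin{bmatrix}
\eta_1 \\[3pt]
\eta_2
\end{bmatrix},
\end{equation*}
and since $M(\theta)$ is convergent to zero, the invariance $(N_1,N_2)(D_1\times D_2) \subset D_1\times D_2$ is secured by choosing the vector $[R_1,R_2]^{\top}$ to dominate $(I-M(\theta))^{-1}[\eta_1,\eta_2]^{\top}$, which has nonnegative entries by Lemma \ref{caracterizare}.

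Next I would verify condition (i): $N_1(D_1\times D_2)$ is relatively compact in $C([0,T];\mathbb{R}^n)$. Continuity of $N_1$ is immediate from continuity of $f$, $g$, and $S_A, S_B$; uniform boundedness follows from the invariance just established; equicontinuity is a standard estimate exploiting uniform continuity of $t\mapsto S_A(t-s)$ on $[-T,T]$ and uniform boundedness of $f(x(s),y(s))$, $g(x(s),y(s))$ on $D_1\times D_2$. Hence Arzelà–Ascoli applies. Condition (ii) is the key point: since $f$ does not enter $N_2$, we have
\begin{equation*}
|N_2(x,y)(t) - N_2(x,\bar y)(t)| \le C_B d \int_0^t e^{\theta s}\,e^{-\theta s}|y(s)-\bar y(s)|\,ds \le C_B d \,\frac{e^{\theta t}-1}{\theta}\,\|y-\bar y\|_\theta,
\end{equation*}
so multiplying by $e^{-\theta t}$ and passing to the supremum gives
\begin{equation*}
\|N_2(x,y) - N_2(x,\bar y)\|_\theta \le L\,\|y-\bar y\|_\theta, \qquad L := dC_B\,\frac{1-e^{-\theta T}}{\theta}.
\end{equation*}

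The main (and essentially only) conceptual obstacle is justifying $L<1$. This follows from the fact that the diagonal entries of a nonnegative matrix convergent to zero are strictly less than $1$: indeed, $L$ is precisely the $(2,2)$-entry of $M(\theta)$, and since $\rho(M(\theta))<1$, a fortiori each diagonal entry is bounded by $\rho(M(\theta))<1$ (equivalently, apply part (d) of Lemma \ref{caracterizare}). With (i) and (ii) established and the domain invariant under $(N_1,N_2)$, Avramescu's theorem yields a pair $(x,y)\in D_1\times D_2$ with $N_1(x,y)=x$ and $N_2(x,y)=y$, and Lemma \ref{lms} converts this fixed point into a solution of the mutual control problem \eqref{se}--\eqref{cc0} in $X_\beta$.
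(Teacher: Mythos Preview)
Your proposal is correct and follows essentially the same approach as the paper: deduce linear growth of $g$ from the Lipschitz and $g(x,0)$ hypotheses, invoke the invariance estimates from Theorem~\ref{schauder} to obtain suitable radii $R_1,R_2$, check relative compactness of $N_1$ on the product of balls, and verify that $N_2(x,\cdot)$ is a contraction in the Bielecki norm with constant $L=dC_B\frac{1-e^{-\theta T}}{\theta}<1$ because this is the $(2,2)$-entry of the convergent-to-zero matrix $M(\theta)$. Your justification of $L<1$ via the diagonal entries of a nonnegative convergent-to-zero matrix (equivalently part~(d) of Lemma~\ref{caracterizare}) is in fact slightly more explicit than the paper's, which simply asserts it.
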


\begin{proof}
Clearly,
\begin{align*}
& \left\vert f(x,y)\right\vert \leq a\left\vert x\right\vert +b\left\vert
y\right\vert +\gamma , \\
& \left\vert g(x,y)\right\vert \leq c\left\vert x\right\vert +d\left\vert
y\right\vert +\delta ,
\end{align*}%
for all $x,y\in \mathbb{R}^{n}$. Then, as in the proof of Theorem \ref%
{schauder}, we find $R_{1},R_{2}>0$ such that
\begin{equation*}
\left\Vert N_{1}(x,y)\right\Vert \leq R_{1}\ \ \ \text{and\ \ \ }\left\Vert
N_{2}(x,y)\right\Vert _{\theta }\leq R_{2}
\end{equation*}%
for all $\left( x,y\right) \in X_{\beta }$ with $\left\Vert x\right\Vert
\leq R_{1}$ and $\left\Vert y\right\Vert _{\theta }\leq R_{2}$. Moreover, $%
N_{1}\left( B_{R_{1}}\times B_{R_{2}}\right) $ is relatively compact in $%
C\left( [0,T];\mathbb{R}^{n}\right) $. It remains to show that $%
N_{2}(x,\cdot )$ is a contraction on $B_{R_{2}}$ with a contraction
coefficient independent of $x\in B_{R_{1}}$. For any $x\in C\left( [0,T];%
\mathbb{R}^{n}\right) $ and any $y,\overline{y}\in C_{\beta }\left( [0,T];%
\mathbb{R}^{n}\right) $, we have
\begin{equation*}
\left\vert N_{2}(x,y)(t)-N_{2}(x,\overline{y})(t)\right\vert \leq
dC_{B}\int_{0}^{t}\left\vert y(s)-\overline{y}(s)\right\vert ds,
\end{equation*}%
whence
\begin{equation*}
\left\Vert N_{2}(x,y)(t)-N_{2}(x,\overline{y})\right\Vert _{\theta }\leq
dC_{B}\frac{1-e^{-\theta T}}{\theta }\left\Vert y-\overline{y}\right\Vert
_{\theta },
\end{equation*}%
where
\begin{equation*}
dC_{B}\frac{1-e^{-\theta T}}{\theta }<1,
\end{equation*}%
since the matrix $M(\theta )$ is convergent to zero. Thus, Avramescu's
theorem applies and gives the final conclusion.
\end{proof}

\begin{remark}
Under the conditions of the previous theorem, if $\left( x,y\right) ,\
\left( x,\overline{y}\right) $ are two solutions of \ the mutual control
problem $\left( \ref{se}\right) $-$\left( \ref{cc0}\right) $, then
necessarily $y=\overline{y}.$
\end{remark}

\begin{center}
\bigskip
\end{center}

\section{\textbf{Conclusions}}

\medskip The concept of mutual control introduced in this paper and
illustrated on the case of semi-linear systems of the first order with a
final condition of proportionality of the states is likely to be used for
various other classes of systems and other controllability conditions. In
this context, problems of observability or partial observability can be
formulated. The working technique, as suggested by this paper, is mainly
based on the fixed point theory and could be completed by considering other
abstract existence principles.


\begin{thebibliography}{99}
\bibitem{bg} M. Beldinski and M. Galewski, \emph{Nash type equilibria for
systems of non-potential equations}, Appl. Math. Comput. \textbf{385}
(2020), 125456.

\bibitem{berman} A. Berman and R. J. Plemmons, \emph{Nonnegative matrices in
the mathematical sciences}, Academic Press, 1979.

\bibitem{c} M. Coron, \emph{Control and nonlinearity}, AMS, Providence, 2007.

\bibitem{d} K. Deimling, \emph{Nonlinear functional analysis}, Springer,
Berlin, 1985.

\bibitem{g} A. Granas, \emph{Fixed point theory}, Springer, New York, 2003.

\bibitem{mr} P. Magal and S. Ruan, \emph{Theory and applications of abstract
semilinear Cauchy problems}, Springer, Berlin, 2018.

\bibitem{lambert} I. Mezo, \emph{The Lambert W function: Its generalizations
and applications}, Chapman and Hall/CRC.

\bibitem{park} S. Park, \emph{Generalizations of the Nash equilibrium
theorem in the KKM theory}, Fixed Point Theor. Appl. \textbf{2010} (2010),
234706.

\bibitem{pmcz} R. Precup, \emph{The role of matrices that are convergent to
zero in the study of semilinear operator systems}, Math. Comput. Model.
\textbf{49} (2009), 703--708.

\bibitem{p1} R. Precup, \emph{Nash-type equilibria and periodic solutions to
nonvariational systems}, Adv. Nonlinear Anal. \textbf{3} (2014), no. 4,
197--207.

\bibitem{p2} R. Precup, \emph{A critical point theorem in bounded convex
sets and localization of Nash-type equilibria of nonvariational systems}, J.
Math. Anal. Appl. \textbf{463} (2018), 412--431.

\bibitem{pn} R. Precup, \emph{On some applications of the controllability
principle for fixed point equations}, Results Appl. Math. \textbf{13}
(2022), 100236.

\bibitem{pc} R. Precup, \emph{Methods in nonlinear integral equations},
Springer, Dordrecht, 2002.

\bibitem{ps} R. Precup and A. Stan, \emph{Stationary Kirchhoff equations and
systems with reaction terms}, AIMS Mathematics \textbf{7} (2022), Issue 8,
15258--15281.

\bibitem{ps2} R. Precup and A. Stan,\emph{\ Linking methods for
componentwise variational systems}, Results Math. \textbf{78} (2023), 1--25.

\bibitem{s} A. Stan, \emph{Nonlinear systems with a partial Nash type
equilibrium}, Studia Univ. Babe\c{s}-Bolyai Math. \textbf{66} (2021),
397--408.

\bibitem{s2} A. Stan, \emph{Nash equilibria for componentwise variational
systems}, J. Nonlinear Funct. Anal. \textbf{6} (2023), 1--10.

\bibitem{s3} A. Stan, \emph{Localization of Nash-type equilibria for systems
with partial variational structure}, J. Numer. Anal. Approx. Theory \textbf{%
\ 52} (2023), 253--272.

\bibitem{v} I. I. Vrabie, $C_{0}$\emph{-Semigroups and applications},
Elsevier, Amsterdam, 2003.

\bibitem{control} J. Zabczyk, \emph{Mathematical control theory}, Springer,
Cham, 2020.
\end{thebibliography}
\end{document}